\newcommand{\Z}{\mathbb Z}
\def\spinc{\ifmmode{\textrm{Spin}^c}\else{$\textrm{Spin}^c$}\fi}
\newcommand{\spincs}{\mathfrak s}
\def\oz{Ozsv{\'a}th--Szab{\'o}}
\renewcommand{\phi}{\varphi}
\newtheorem{theorem}{Theorem}[section]
\newtheorem{lemma}[theorem]{Lemma}
\newtheorem{corollary}[theorem]{Corollary}
\newtheorem*{lmprob}{Problem 4.6}
\newtheorem*{ack}{Acknowledgements}
\theoremstyle{definition}
\def\conn{\mathbin{\#}}
\DeclareMathOperator*{\id}{id}
\newcommand{\thmref}[1]{Theorem~\ref{T:#1}}
\newcommand{\lemref}[1]{Lemma~\ref{L:#1}}
\newcommand{\clref}[1]{Claim~\ref{C:#1}}
\title{On smoothly superslice knots}
\author[Daniel Ruberman]{Daniel Ruberman}
\address{Department of Mathematics, MS 050\newline\indent Brandeis
University \newline\indent Waltham, MA 02454}
\email{ruberman@brandeis.edu}
\thanks{The author was partially supported by NSF Grant 1506328.\\
Math.~Subj.~Class.~2010: 57M25 (primary), 57Q60 (secondary).}
\begin{document}
\begin{abstract} 
We find smoothly slice (in fact doubly slice) knots in the 3-sphere with trivial Alexander polynomial that are not superslice, answering a question posed by Livingston and Meier.
\end{abstract}
\maketitle
\section{Introduction}
A recent paper of Livingston and Meier raises an interesting question about {\em superslice knots}.  Recall~\cite{brakes:superslice} that a knot $K$ in $S^3$ is said to be superslice if there is a slice disk $D$ for $K$ such that the double of $D$ along $K$ is the unknotted 2-sphere $S$ in $S^4$. We will refer to such a disk as a {\em superslicing disk}. In particular, a superslice knot is slice and also doubly slice, that is, a slice of an unknotted $2$-sphere in $S^4$.  Livingston and Meier ask about the converse in the smooth category.
\begin{lmprob}[Livingston-Meier \cite{livingston-meier:ds}]\label{prob:lm}
Find a smoothly slice knot $K$ with $\Delta_K(t) = 1$ that is not smoothly superslice.
\end{lmprob}
The corresponding question in the topological (locally flat) category is completely understood~\cite{livingston-meier:ds,meier:ds}, for a knot $K$ with $\Delta_K(t) = 1$  is topologically superslice.

In this note we give a simple solution to problem 4.6, making use of Taubes' proof~\cite{taubes:periodic} that Donaldson's diagonalization theorem~\cite{donaldson} holds for certain non-compact manifolds. For $K$ a knot in $S^3$, we write $\Sigma_k(K)$ for a $k$-fold cyclic branched cover of $S^3$ branched along $K$.  The same notation will be used for the corresponding branched cover along an embedded disk in $B^4$ or sphere in $S^4$.  
\begin{theorem}\label{T:ss}
Suppose that $J$ is a knot with Alexander polynomial $1$, so that $\Sigma_k(J) = \partial W$, where $W$ is simply connected and the intersection form on $W$ is definite and not diagonalizable.  Then the knot $K = J \# -J$ is smoothly doubly slice, but is not smoothly superslice.
\end{theorem}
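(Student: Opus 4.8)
The plan is to treat the two conclusions separately: that $K=J\# -J$ is doubly slice is classical and elementary, whereas the failure of superslicing is where the branched-cover input and Taubes' theorem are needed.

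For the doubly slice assertion I would use the standard fact that $J \# -J$ is smoothly doubly slice for \emph{every} knot $J$, where $-J$ denotes the reverse mirror. Concretely, $K$ can be exhibited as an equatorial cross-section $S \cap S^3$ of an unknotted $2$-sphere $S\subset S^4$: one builds $S$ from a symmetric movie in which an unknot is born, is knotted by a local copy of $J$ above the equatorial $S^3$ and by $-J$ below it so that the equatorial slice reads $J\# -J$, and then dies; because the knotting is introduced inside a ball, $S$ is isotopic to the standard sphere while $S\cap S^3 = K$. This half uses no gauge theory and I would dispatch it quickly, citing the Livingston--Meier discussion.

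For the superslice assertion I would argue by contradiction. Suppose $K$ bounds a superslicing disk $D\subset B^4$, so that the double $S = D\cup_K(-D)$ is the unknotted $2$-sphere. Passing to $k$-fold cyclic branched covers, set $X=\Sigma_k(B^4,D)$, a compact oriented $4$-manifold with $\partial X = \Sigma_k(K)$. Since branched covering commutes with doubling and $S$ is unknotted, $X\cup_{\Sigma_k(K)}(-X) = \Sigma_k(S^4,S) = S^4$. A Mayer--Vietoris argument, using that $\Sigma_k(K)$ is an integral homology sphere (here is where $\Delta_K(t)=\Delta_J(t)^2=1$ enters), then forces $X$ to be an integral homology $4$-ball. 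Moreover $\Sigma_k(K)=\Sigma_k(J)\,\#\,{-\Sigma_k(J)}$, and by hypothesis $\Sigma_k(J)=\partial W$ with $W$ simply connected, definite, and non-diagonalizable.

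The heart of the argument, and the step I expect to be the main obstacle, is to convert this data into a smooth definite $4$-manifold with a \emph{periodic end} modelled on $\Sigma_k(J)$ whose intersection form is $Q_W$. Triviality of the Alexander polynomial makes the infinite cyclic cover of the exterior of $D$ acyclic, so that the homology ball $X$ supplies a homology cobordism from $\Sigma_k(J)$ to itself; stacking infinitely many copies of this cobordism produces an acyclic periodic end, and capping the remaining end with $W$ yields a definite $4$-manifold $\mathcal Z$ with a single periodic end and intersection form exactly $Q_W$. One must check carefully that the end really is periodic and homologically a product (this is what $\Delta=1$ and simple connectivity of $W$ are for) and that $\mathcal Z$ satisfies the hypotheses of Taubes' extension of Donaldson's diagonalization theorem to asymptotically periodic manifolds. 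Granting this, Taubes' theorem forces $Q_W$ to be diagonalizable, contradicting the hypothesis on $W$; hence no superslicing disk exists. The reason Taubes' non-compact version is unavoidable is precisely that $\Sigma_k(J)\#-\Sigma_k(J)$ bounds no definite form assembled from a single $W$ (its natural definite filling $W\natural(-W)$ is indefinite), so one cannot close up to a compact manifold and must instead exploit the periodicity.
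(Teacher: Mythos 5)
Your superslice argument follows the paper's own proof almost step for step: the branched cover $V=\Sigma_k(B^4,D)$ of the superslicing disk, the decomposition $S^4=\Sigma_k(S)=V\cup_{\Sigma_k(K)}V$, Mayer--Vietoris to see $V$ is a homology ball, conversion to a self homology cobordism of $\Sigma_k(J)$ (the paper makes this explicit by attaching a $3$-handle along the connected-sum sphere in $\partial V$), infinite stacking capped off by $W$, and Taubes' end-periodic diagonalization theorem \cite{taubes:periodic}. But the step you flag as ``the main obstacle'' and then defer --- verifying Taubes' hypotheses --- is a genuine gap, and the ingredients you assign to it ($\Delta=1$ and simple connectivity of $W$) cannot close it. Admissibility in Taubes' sense requires control of the fundamental group of the periodic piece, and the paper gets it from $\pi_1(V)=1$; triviality of the Alexander polynomial is purely homological, making $V$ a homology ball (your Mayer--Vietoris already does that) while saying nothing about $\pi_1(V)$. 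The missing idea is the one place where unknottedness of $S=D\cup_K D$ is used beyond homology: van Kampen presents $\pi_1(S^4-S)\cong\Z$ as a pushout of two copies of $\pi_1(B^4-D)$ over $\pi_1(S^3-K)$ with \emph{equal} structure maps, and the universal property of the pushout (Lemma~\ref{L:pushout}) yields a surjection $\Z\to\pi_1(B^4-D)$, whence $\pi_1(B^4-D)\cong\Z$ and $V$ is simply connected, in fact contractible (Corollary~\ref{C:disk}). That your homological hypotheses genuinely do not suffice is shown by a sanity check: the standard slice disk $D'$ for $K=J\conn -J$, with $(B^4,D')=(B^3,A)\times I$ for $A$ a knotted arc, has $\Sigma_k(B^4,D')=(\Sigma_k(J)\setminus \mathring{B}^3)\times I$, a homology ball satisfying every condition you impose; after the $3$-handle the cobordism is just $\Sigma_k(J)\times I$, stacking gives $W$ with a cylindrical end, and if Taubes applied without a $\pi_1$ hypothesis you would conclude $Q_W$ is diagonalizable --- that is, your argument run on $D'$ would ``prove'' that the $W$ hypothesized in the theorem cannot exist, and equally that the doubly slice knot $K$ is not slice. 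What blocks Taubes there is exactly $\pi_1(\Sigma_k(J))\neq 1$ (flat $\SU(2)$ connections on the end), and supersliceness, via the pushout lemma, is what rules these out. So the $\pi_1$ step is not a routine verification to be checked at the end; it is the theorem's crux, and your proposal omits it.

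A smaller point on the doubly slice half: the fact you invoke is standard and citable, but your sketch's justification is wrong as stated. The natural realization of your symmetric movie --- local copies of $J$ above the equator and $-J$ below --- is the double of the standard disk $D'$ above, namely $\partial(A\times D^2)\subset\partial(B^3\times D^2)$, which is the \emph{spun} $2$-knot of $J$; this is knotted whenever $J$ is, since its complement has the group of $J$ as fundamental group. ``The knotting is introduced inside a ball'' is not an argument: no isotopy takes an unknot to $J\conn -J$, so the levels of any such movie must change by saddles, and unknottedness of the resulting sphere is then a genuine theorem rather than a locality observation. What is true, and what the paper cites via Sumners \cite{sumners:inv2}, is Zeeman's theorem \cite{zeeman:twist} that the $1$-\emph{twist} spin of $J$ is unknotted, with $J\conn -J$ as an equatorial slice. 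Replace the sketch with those citations; note also the instructive contrast that the untwisted symmetric sphere (the double of $D'$) is knotted, which illustrates the theorem's conclusion that no slice disk for $K$ is a superslicing disk.
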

An unpublished argument of Akbulut says that the positive Whitehead double of the trefoil is a knot $J$ satisfying the hypotheses of the theorem, with $k=2$. The construction is given as~\cite[Exercise 11.4]{akbulut:book} and is also documented, along with some generalizations, in the paper~\cite{cochran-gompf:donaldson}. Hence $J$ gives an answer to Problem 4.6. We remark that for the purposes of the argument, it doesn't matter if $W$ is positive or negative definite, as one could replace $J$ by $-J$ and change all the signs.

We need a simple and presumably well-known algebraic lemma.
\begin{lemma}\label{L:pushout}
Suppose that 
$$
\xymatrix{
& B \ar[dr]^{j_1} &\\
A \ar[ur]^{i_1} \ar[dr]_{i_2}&&  C\\
& B \ar[ur]_{j_2} &
}
$$
is a pushout of groups, and that $i_1 = i_2$. Then $C$ surjects onto $B$. \end{lemma}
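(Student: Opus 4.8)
The plan is to read off the surjection straight from the universal property of the pushout, taking the target to be $B$ itself. Write the pushout as $C$ with structure homomorphisms $j_1, j_2 \colon B \to C$, so that by definition $j_1 i_1 = j_2 i_2$ and $C$ is universal among groups receiving compatible maps from the two copies of $B$: for any group $G$ and homomorphisms $f_1, f_2 \colon B \to G$ with $f_1 i_1 = f_2 i_2$, there is a unique $h \colon C \to G$ with $h j_1 = f_1$ and $h j_2 = f_2$.

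The one thing needed is a compatible pair of maps out of the two copies of $B$ into $B$, and this is exactly what the hypothesis $i_1 = i_2$ provides. I would set $G = B$ and $f_1 = f_2 = \id_B$; the compatibility condition $f_1 i_1 = f_2 i_2$ then reduces to $i_1 = i_2$, which holds by assumption. The universal property therefore yields a homomorphism $h \colon C \to B$ with $h j_1 = \id_B$ (and likewise $h j_2 = \id_B$).

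Surjectivity is then immediate: since $h j_1 = \id_B$, the image of $h$ contains $h(j_1(B)) = B$, so $h \colon C \twoheadrightarrow B$ is onto, as desired. I do not expect any genuine obstacle in this argument — the content of the lemma is simply the observation that $i_1 = i_2$ is precisely the cocone condition needed to fold both legs of the pushout onto $B$ by the identity, after which the fact that such a retraction is automatically surjective finishes the proof.
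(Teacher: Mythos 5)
Your proposal is correct and is essentially the paper's own proof: the paper likewise applies the universal property with the cocone $f_1 = f_2 = \id_B$ (compatible precisely because $i_1 = i_2$) to obtain $h \colon C \to B$, and concludes surjectivity from $h \circ j_1 = \id_B$. Your write-up just makes the retraction property and the surjectivity step slightly more explicit.
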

\begin{proof}
This follows from the universal property of pushouts; the identity map $\id_B$ satisfies $\id_B \circ i_1 = \id_B \circ i_2$, and hence defines a homomorphism $C \to B$ with the same image as $\id_B$.
\end{proof}
Applying \lemref{pushout} to the decomposition of the complement of the unknot in $S^4$ into two disk complements, we obtain the following useful facts. (The first of these was presumably known to Kirby and Melvin; compare~\cite[Addendum, p. 58]{kirby-melvin:R}, and the second is due to Gordon and Sumners~\cite{gordon-sumners:ball-pairs}.)
\begin{corollary}\label{C:disk}
If $K$ is superslice and $D$ is a superslicing disk, then 
$$
\pi_1(B^4 -D) \cong \Z\ \text{and}\ \Delta_K(t)=1.
$$
\end{corollary}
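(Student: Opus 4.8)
The plan is to read off both conclusions from the doubling description of the unknotted sphere. Write $Y = B^4 - N(D)$ for the exterior of a superslicing disk and $X = S^3 - N(K)$ for the knot exterior. The hypothesis that the double of $D$ along $K$ is the unknotted sphere $S$ says precisely that
$$
S^4 - N(S) = Y \cup_X Y ,
$$
two identical copies of $Y$ glued along the common boundary piece $X$. Since $S$ is unknotted its complement is homotopy equivalent to $S^1$, so $\pi_1(S^4 - N(S)) \cong \Z$.

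First I would run van Kampen's theorem on this decomposition. Because $X$ is connected, it presents $\pi_1(S^4 - N(S))$ as the pushout $\pi_1(Y) *_{\pi_1(X)} \pi_1(Y)$, in which the two structure maps $\pi_1(X) \to \pi_1(Y)$ are both induced by the \emph{same} inclusion $X \hookrightarrow Y$ and therefore coincide. This is exactly the situation of \lemref{pushout}, with $A = \pi_1(X)$, $B = \pi_1(Y)$ and $C = \pi_1(S^4 - N(S)) \cong \Z$, so the lemma produces a surjection $\Z \cong \pi_1(S^4 - N(S)) \twoheadrightarrow \pi_1(B^4 - D)$. Hence $\pi_1(B^4 - D)$ is cyclic. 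To pin it down I would invoke the standard fact that a slice-disk exterior has $H_1(B^4 - D) \cong \Z$ (the meridian generates, of infinite order since it links $D$ once); a cyclic group that is surjected onto by $\Z$ and has abelianization $\Z$ must be infinite cyclic, giving $\pi_1(B^4 - D) \cong \Z$.

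For the Alexander polynomial I would feed the identification $\pi_1(B^4 - D) \cong \Z$ into the infinite cyclic covers. Since $Y$ has $\pi_1(Y) \cong \Z$, its infinite cyclic cover $\widetilde Y$ is the universal cover and is in particular simply connected, so $H_1(\widetilde Y; \Q) = 0$. The inclusion $X \hookrightarrow Y$ lifts to the covers and induces a map of $\Q[t,t^{-1}]$-modules from the Alexander module $A = H_1(\widetilde X; \Q)$ to $H_1(\widetilde Y; \Q) = 0$. The crux is the duality statement that the kernel of this map is a metabolizer for the Blanchfield pairing on $A$ — the ``half-lives--half-dies'' principle for the Blanchfield form associated to a slice disk. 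Here the kernel is all of $A$, so $A$ equals its own annihilator; as the Blanchfield pairing is non-singular, this forces $A = 0$, and hence $\Delta_K(t) \doteq \operatorname{ord}(A) = 1$, the sign being fixed by $\Delta_K(1) = 1$.

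The main obstacle is this last step: the passage from $\pi_1(B^4 - D) \cong \Z$ to $\Delta_K(t) = 1$ is not formal and rests on the Blanchfield-pairing duality for the disk exterior, whereas the statement about $\pi_1$ is an immediate application of \lemref{pushout}. One should also take care that the two copies of $Y$ are genuinely glued by the identity along $X$, so that the two maps $\pi_1(X) \to \pi_1(Y)$ really do agree and \lemref{pushout} applies.
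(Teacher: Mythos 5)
Your proposal is correct and follows essentially the same route as the paper: the paper likewise applies \lemref{pushout} to the decomposition of the unknotted sphere's complement into two copies of the disk exterior, obtaining a surjection $\Z \cong \pi_1(S^4 - S) \to \pi_1(B^4 - D)$ and hence $\pi_1(B^4 - D) \cong \Z$. For the second conclusion the paper simply cites Milnor duality in the infinite cyclic covering to conclude that the homology of the infinite cyclic cover of $S^3 - K$ vanishes; your Blanchfield-pairing metabolizer argument (kernel of $H_1(\widetilde{X};\Q) \to H_1(\widetilde{Y};\Q) = 0$ is self-annihilating, so the rational Alexander module is zero) is the same duality-in-the-infinite-cyclic-cover mechanism in slightly different packaging, and is equally valid.
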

\begin{proof}
The lemma says that there is a surjection $\Z \cong \pi_1(S^4 - S) \to \pi_1(B^4 -D)$.  Hence $\pi_1(B^4 -D)$ is abelian and so must be isomorphic to $\Z$.  This condition implies, using Milnor duality~\cite{milnor:covering} in the infinite cyclic covering, that the homology of the infinite cyclic covering of $S^3-K$ vanishes, which is equivalent to saying that $\Delta_K(t)=1$.
\end{proof}

\begin{proof}[Proof of \thmref{ss}]
It is standard~\cite{sumners:inv2} that any knot of the form $J \conn -J$ is doubly slice. In fact, it is a slice of the $1$-twist spin of $J$, which was shown by Zeeman~\cite{zeeman:twist} to be unknotted.

Suppose that $K$ is superslice and let $D$ be a superslicing disk, so $D \cup_K D = S$, an unknotted sphere. Then $S^4 = \Sigma_k(S) = V \cup_Y V$, where we have written $Y= \Sigma_k(K)$ and  $V = \Sigma_k(D)$. By \clref{disk}, the $k$-fold cover of $B^4 - D$ has $\pi_1 \cong \Z$, so the branched cover $V$ is simply connected.

Note that $\Sigma_k(K) = \Sigma_k(J) \conn -\Sigma_k(J)$.  Since $\Delta_J(t) =1$, the same is true for $\Delta_K(t)$, moreover this implies that both $\Sigma_k(J)$ and $\Sigma_k(K)$ are homology spheres.  An easy Mayer-Vietoris argument says that $V = \Sigma_k(D)$ is a homology ball; in fact \clref{disk} implies that it is contractible.  Adding a $3$-handle to $V$, we obtain a simply-connected homology cobordism $V'$ from $\Sigma_k(J)$ to itself.  By hypothesis, there is a manifold $W$ with boundary $\Sigma_k(J)$ and non-diagonalizable intersection form. Stack up infinitely many copies of $V'$, and glue them to W to make a definite periodic-end manifold $M$, in the sense of Taubes~\cite{taubes:periodic}.  Since $\pi_1(V)$ is trivial, $M$ is {\em admissible} (see ~\cite[Definition 1.3]{taubes:periodic}), and Taubes shows that its intersection form (which is the same as that of $W$) is diagonalizable. This contradiction proves the theorem.
\end{proof}


The fact that $\pi_1(B^4-D) \cong \Z$ for a superslicing disk leads to a second obstruction to supersliceness, based on the \oz\ $d$-invariant~\cite{oz:boundary}. Recall from~\cite{manolescu-owens:delta} (for degree $2$ covers) and~\cite{jabuka:delta} in general that for a knot $K$ and prime $p$, that one denotes by $\delta_{p^n}(K)$ the $d$-invariant of a particular spin structure $\spincs$ on $\Sigma_{p^n}$ pulled back from the $3$-sphere.  The fact that a $p^n$ fold branched cover of a slicing disk is a rational homology ball implies that if $K$ slice then $\delta_{p^n}(K)= 0$. For a non-prime-power degree $k$, the invariant $\delta_k(K)$ might not be defined, because $\Sigma_k(K)$ is not a rational homology sphere.  (One might define such an invariant using Floer homology with twisted coefficients as in~\cite{behrens-golla:twist,levine-ruberman:correction}, but there's no good reason that it would be a concordance invariant.)
\begin{theorem}\label{T:dss}
If $K$ is superslice, then for any $k$, the $d$-invariant $d(\Sigma_k(K),\spincs_0)$ is defined and vanishes.
\end{theorem}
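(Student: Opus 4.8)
The plan is to reduce the assertion to the standard fact that the \oz\ $d$-invariant vanishes for a rational homology sphere that bounds a rational homology ball, after first using supersliceness to guarantee that the invariant is defined for arbitrary $k$. To that end I would begin by applying \clref{disk} to record that $\Delta_K(t)=1$. This is the decisive input that lets the conclusion hold for \emph{every} $k$, and not merely for prime powers. Since the order of $H_1(\Sigma_k(K))$ equals $\prod_{j=1}^{k-1}|\Delta_K(\zeta^j)|$ with $\zeta = e^{2\pi i/k}$, the condition $\Delta_K\equiv 1$ forces $\Sigma_k(K)$ to be an integral homology sphere. In particular $\Sigma_k(K)$ is a rational homology sphere, so the $d$-invariant is defined, and it carries a unique \spinc\ structure, namely the structure $\spincs_0$ pulled back from $S^3$. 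This addresses precisely the concern raised in the remark preceding the theorem.

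Next I would reuse the branched-cover computation from the proof of \thmref{ss}. Writing $D$ for a superslicing disk and $V = \Sigma_k(D)$, \clref{disk} gives $\pi_1(B^4-D)\cong\Z$, so $V$ is simply connected; a Mayer-Vietoris argument, again invoking $\Delta_K\equiv 1$, then shows that $V$ is contractible with $\partial V = \Sigma_k(K)$. Thus $\Sigma_k(K)$ bounds the homology ball $V$, and in particular a rational homology ball.

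Finally I would apply the vanishing theorem of \cite{oz:boundary}: if a rational homology sphere bounds a rational homology ball over which a \spinc\ structure extends, then the corresponding $d$-invariant is zero. Because $V$ is a homology ball, the unique \spinc\ structure $\spincs_0$ extends over $V$, and hence $d(\Sigma_k(K),\spincs_0)=0$. I expect the main obstacle to be conceptual rather than technical: the crucial step is the opening observation that supersliceness, through $\Delta_K\equiv 1$, makes $\Sigma_k(K)$ a homology sphere for all $k$, so that the invariant exists in the first place; the concluding Floer-theoretic input is then standard.
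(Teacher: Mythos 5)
Your proposal is correct and follows essentially the same route as the paper's proof: $\Delta_K(t)=1$ via \clref{disk} makes $\Sigma_k(K)$ a homology sphere (so the invariant is defined with its unique \spinc\ structure), the contractibility of $\Sigma_k(D)$ is borrowed from the proof of \thmref{ss}, and the vanishing follows from \cite[Theorem 1.12]{oz:boundary}. The extra details you supply (the product formula for $|H_1|$ of branched covers, and phrasing the last step via rational homology balls) are just elaborations of the paper's argument, not a different approach.
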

\begin{proof}
Since by \clref{disk} the Alexander polynomial is trivial, so $\Sigma_k(K)$ is a homology sphere, and hence $d(\Sigma_k(K),\spincs_0)$ is defined. (There is only the one spin structure.)  As in the proof of \thmref{ss}, the branched cover $\Sigma_k(D)$ is contractible, and 
hence~\cite[Theorem 1.12]{oz:boundary},  $d(\Sigma_k(K),\spincs_0) = 0$.
\end{proof}
Sadly, we do not know any examples of a slice knot where \thmref{dss} provides an obstruction to it being superslice. For such a knot would not be ribbon, so we would also have a counterexample to the slice-ribbon conjecture!
\begin{ack}
Thanks to Hee Jung Kim for an interesting conversation that led to this paper, and to Chuck Livingston, Paul Melvin, and Nikolai Saveliev for comments on an initial draft.
\end{ack}
\vspace*{-2ex}
\def\cprime{$'$}
\providecommand{\bysame}{\leavevmode\hbox to3em{\hrulefill}\thinspace}

\end{document}